\newtheorem{theo}{Theorem}[section]
\newtheorem{lemma}[theo]{Lemma}
\newtheorem{defi}[theo]{Definition}
\newtheorem{coro}[theo]{Corollary}
\newtheorem{rem}[theo]{Remark}
\newtheorem{exam}[theo]{Example}
\newcommand\Po{\operatorname{Po}}
\newcommand\Tc{\operatorname{Tc}}
\newcommand\Rt{\operatorname{Rt}}
\newcommand\Ps{\operatorname{Ps}}
\newcommand\sm{\operatorname{sm}} 
\newcommand\psh{\operatorname{psh}}
\newcommand\smp{\operatorname{smp}}
\newcommand\Alg{\operatorname{Alg}}
\newcommand\Iso{\operatorname{Iso}}
\newcommand\id{\operatorname{id}}
\newcommand\Id{\operatorname{Id}}
\newcommand\C{\operatorname{\bf C}}
\newcommand\CAT{\operatorname{\bf CAT}}
\newcommand\COMB{\operatorname{\bf COMB}}
\newcommand\LOC{\operatorname{\bf LOC}}
\newcommand\CELL{\operatorname{\bf CELL}}
\newcommand\CMOD{\operatorname{\bf CMOD}}
\newcommand\Mono{\operatorname{Mono}}
\newcommand\Ins{\operatorname{Ins}}
\newcommand\Eq{\operatorname{Eq}}
\newcommand\cof{\operatorname{cof}}
\newcommand\tcof{\operatorname{tcof}}
\newcommand\cell{\operatorname{cell}}
\newcommand\colim{\operatorname{colim}}
\newcommand\cc{\mathcal {C}}
\newcommand\cd{\mathcal {D}}
\newcommand\ck{\mathcal {K}}
\newcommand\cl{\mathcal {L}}
\newcommand\cm{\mathcal {M}}
\newcommand\cs{\mathcal {S}}
\newcommand\cp{\mathcal {P}}
\newcommand\cx{\mathcal {X}}
\newcommand\Tcsh[1]{\operatorname{#1\textrm{-}Tc}}
\date{April 29, 2013}
\begin{document}
\title[Cellular categories]
{Cellular categories}
\author[M. Makkai and J. Rosick\'{y}]
{M. Makkai$^{*}$ and J. Rosick\'{y}$^{**}$}
\thanks{$^{*}$ Supported by the project CZ.1.07/4.00/20.0003 of the Operational Programme Education for Competitiveness of the Ministry
               of Education, Youth and Sports of the Czech Republic. $^{**}$ Supported by the Grant Agency of the Czech republic under the grant 
               P201/12/G028.} 
\address{
\newline 
Department of Mathematics and Statistics\newline
Masaryk University, Faculty of Sciences\newline
Kotl\'{a}\v{r}sk\'{a} 2, 611 37 Brno, Czech Republic\newline
makkai@math.mcgill.ca\newline
rosicky@math.muni.cz
}
 
\begin{abstract}
We study locally presentable categories equipped with a cofibrantly generated weak factorization system. Our main result is that these categories
are closed under $2$-limits, in particular under pseudopullbacks. We give applications to deconstructible classes in Grothendieck categories.
We discuss pseudopullbacks of combinatorial model categories.  
\end{abstract} 
\keywords{locally presentable category, weak factorization system, model category}
\subjclass{18C35,55U35 }

\maketitle
 
\section{Introduction}
We introduce cellular categories as categories equipped with a class of morphisms containing all isomorphisms and closed under pushout and transfinite 
composite (= transfinite composition). The special case is a category equipped with a weak factorization system, which includes categories equipped 
with a factorization system. The latter categories are called ``structured " in \cite{AHS}. Cellular categories are abundant in homotopy theory because 
any Quillen model category carries two weak factorization systems, i.e., two cellular structures given by cofibrations and trivial cofibrations, resp. 
There are also various concepts of ``cofibration categories" equipped with cofibrations and weak equivalences (see \cite{RB} for a recent survey). One 
can do homotopy theory in any category equipped with a weak factorization system because we have cylinder objects and hence homotopies there (see 
\cite{KR}). Cellular category does not need to have weak factorizations -- for example pure monomorphisms in certain locally finitely presentable 
categories (see \cite{BR}). However, in a locally presentable category, one always has weak factorizations whenever cellular morphism are generated 
by a set of morphisms. The left part of the corresponding factorization system consists of retracts of cellular morphisms. In harmony with the J. Smith's 
concept of a combinatorial model category, we call such cellular categories combinatorial.

Our main result is that combinatorial cellular categories are closed under constructions of limit type. Like for locally presentable
(or accessible categories) these limits should be defined in the framework of $2$-categories and they can be reduced to products, inserters
and equifiers (see \cite{MP} and \cite{AR}). These limits are called PIE-limits. The consequence is that they include both lax limits and pseudolimits. 
It turns out that the key step is the closedness under pseudopullbacks and the key ingredience is the use of good colimits introduced by Lurie \cite{L}
and futher developed by the first author in \cite{M}. Lurie used good colimits for lifting cellular structure to functor categories,
which is a limit type construction.

Our starting point is  \cite{MRV} and we are using notation from that paper. Among others, the present paper links combinatorial cellular categories 
with deconstructible classes of objects in Grothendieck abelian categories (see \cite{G}, \cite{EAPT}, \cite{S}, \cite{SS} and \cite{Be}) where good 
colimits are replaced by generalized Hill lemma. Our limit theorem for combinatorial cellular categories implies some limit theorems
for deconstructible classes proved in \cite{S} and \cite{Be}.

\section{Combinatorial categories}
\begin{defi}\label{def2.1}
{\em  
A cocomplete category $\ck$ is called \textit{cellular} if it is equipped with a class $\cc$ of morphisms containing all isomorphisms and closed 
under pushout and transfinite composite.
}
\end{defi}

Morphisms belonging to $\cc$ are called \textit{cellular} and $\cc$ will be often denoted as $\cell(\ck)$. Given a class $\cx$ of morphisms
of a cocomplete category $\ck$ then $\cell(\cx)$ denotes the closure of $\cx$ under pushout and transfinite composite. In fact, $\cell(\cx)$
consists of transfinite composites of pushouts of morphisms from $\cx$. We say that the cellular category $(\ck,\cell(\cx))$ is \textit{cellularly
generated} by $\cx$. In a cellular category $\ck$, let 
$$
\cof(\ck)=\Rt\cell(\ck)
$$ 
consist of retracts of cellular morphisms in the category $\ck^2$ of morphisms of $\ck$. Elements of this class are called \textit{cofibrations}. 

\begin{lemma}\label{le2.2}
Let $\ck$ be a cellular category. Then $(\ck,\cof(\ck))$ is a cellular category.
\end{lemma}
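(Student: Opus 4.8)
The plan is to check the three defining properties of a cellular structure for the class $\cof(\ck)=\Rt\cell(\ck)$: that it contains all isomorphisms, is closed under pushout, and is closed under transfinite composite (cocompleteness of $\ck$ is already part of the hypothesis). The first is immediate: every morphism is a retract of itself in $\ck^{2}$, so $\cell(\ck)\subseteq\cof(\ck)$, and $\cell(\ck)$ contains all isomorphisms by Definition~\ref{def2.1}.

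\emph{Closure under pushout.} The underlying fact is that a pushout of a retract is a retract of a pushout. Suppose $f\colon A\to B$ is a retract of a cellular morphism $g\colon C\to D$, witnessed by $i\colon A\to C$, $r\colon C\to A$, $j\colon B\to D$, $s\colon D\to B$ with $ri=\id_{A}$, $sj=\id_{B}$, $gi=jf$, $sg=fr$. Given $h\colon A\to A'$, let $f'\colon A'\to B'$ be the pushout of $f$ along $h$, with structure map $h'\colon B\to B'$, and let $g'\colon A'\to D'$ be the pushout of $g$ along $hr\colon C\to A'$, with structure map $l'\colon D\to D'$; then $g'\in\cell(\ck)$ because $\cell(\ck)$ is closed under pushout. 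Using the universal property of $B'$ one obtains $\alpha\colon B'\to D'$ with $\alpha f'=g'$ and $\alpha h'=l'j$, and using the universal property of $D'$ one obtains $\beta\colon D'\to B'$ with $\beta g'=f'$ and $\beta l'=h's$; the required compatibilities reduce to the four identities above together with the pushout relations $f'h=h'f$ and $g'hr=l'g$. Checking on the two structure maps of $B'$ gives $\beta\alpha=\id_{B'}$, so the pairs $(\id_{A'},\alpha)$ and $(\id_{A'},\beta)$ exhibit $f'$ as a retract of $g'$ in $\ck^{2}$, whence $f'\in\cof(\ck)$.

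\emph{Closure under transfinite composite.} First I would record a normal form: $f$ lies in $\Rt\cell(\ck)$ if and only if $f$ is a retract of a cellular morphism \emph{sharing its domain}, under the identity on that domain. The nontrivial direction is the usual retract argument: given a retract as above, push $g$ out along $r\colon C\to A$; the resulting map $A\to D\sqcup_{C}A$ is cellular, and $j,s$ together with the pushout structure map exhibit $f$ as a domain-fixing retract of it. Now let $(X_{i})_{i\le\lambda}$ be a smooth chain whose successor morphisms $f_{i}\colon X_{i}\to X_{i+1}$ lie in $\cof(\ck)$; by the normal form each $f_{i}$ is a domain-fixing retract of a cellular $g_{i}\colon X_{i}\to Z_{i}$, so there are $u_{i}\colon X_{i+1}\to Z_{i}$, $v_{i}\colon Z_{i}\to X_{i+1}$ with $v_{i}u_{i}=\id$, $u_{i}f_{i}=g_{i}$, $v_{i}g_{i}=f_{i}$. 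By transfinite recursion I build a smooth chain $(W_{i})_{i\le\lambda}$ with $W_{0}=X_{0}$, whose successor morphisms $W_{i}\to W_{i+1}$ are cellular, together with maps $\phi_{i}\colon X_{i}\to W_{i}$ and $\psi_{i}\colon W_{i}\to X_{i}$ that are natural in $i$ and satisfy $\psi_{i}\phi_{i}=\id_{X_{i}}$: at successor steps set $W_{i+1}=Z_{i}\sqcup_{X_{i}}W_{i}$, the pushout of $g_{i}$ along $\phi_{i}$, so $W_{i}\to W_{i+1}$ is a pushout of $g_{i}$ hence cellular, and define $\phi_{i+1},\psi_{i+1}$ from $u_{i},v_{i}$ and the pushout structure maps; at limit ordinals take colimits, which carries along all the data by smoothness and naturality. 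Then $X_{0}=W_{0}\to W_{\lambda}$ is a transfinite composite of cellular morphisms, hence cellular, and $\phi_{\lambda},\psi_{\lambda}$ (with $\phi_{0}=\psi_{0}=\id_{X_{0}}$ on domains) exhibit the transfinite composite $X_{0}\to X_{\lambda}$ as a retract of it, so $X_{0}\to X_{\lambda}\in\cof(\ck)$.

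\emph{Main obstacle.} The diagram chases establishing the two retractions are routine. The delicate point is the transfinite step: one must arrange that the auxiliary chain $(W_{i})$ is smooth and that the comparison maps $\phi_{i},\psi_{i}$ form compatible cocones, so that they pass to colimits at limit ordinals without losing the relation $\psi_{i}\phi_{i}=\id$. Reducing at the outset to the domain-fixing normal form is what keeps the recursion tractable; otherwise one would also have to transport each $g_{i}$ through the domain-retraction of $f_{i}$ at every successor stage, and the comparison maps would no longer be defined by a clean universal property.
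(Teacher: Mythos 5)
Your proof is correct, and its core idea is the same as the paper's: reduce to the normal form in which a cofibration is a retract, under the identity on its domain, of a cellular morphism (this is precisely \cite{MRV} 2.1(5), which the paper cites and you reprove by pushing the cellular morphism out along the domain retraction), and then exhibit a (transfinite) composite of cofibrations as a domain-fixing retract of a (transfinite) composite of cellular morphisms obtained by pushing out the cellular replacements along comparison maps. The differences are in how much is actually carried out: the paper dismisses pushout closure as easy (you verify it), and for composition it only writes the binary step -- forming a pushout $g_1$ of the cofibration $f_1$ along $u_1$ and showing $f_1f_0$ is a retract of $g_1g_0$ -- before asserting closure under transfinite composites, whereas you perform the full transfinite recursion, building the auxiliary smooth chain $(W_i)$ with cellular links and natural comparison maps $\phi_i,\psi_i$ with $\psi_i\phi_i=\id$, which is what is genuinely needed at limit stages. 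Note also that in the paper's displayed square the pushout is taken of the cofibration $f_1$ itself, so the comparison morphism $g_1g_0$ is not manifestly cellular as written; your successor step instead pushes out the cellular replacement $g_i$ along $\phi_i$, which is exactly what keeps the comparison chain cellular, so on this point your argument is the cleaner (and effectively the intended) version of the paper's sketch.
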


\begin{proof}
It is easy to see that $\cof(\ck)$ is closed under pushout. Let $f_0:A_0\to A_1$ and $f_1:A_1\to A_2$ be two composable cofibrations. 
Following \cite{MRV} 2.1(5), there are cellular morphisms $g_0:A_0\to B_1$, $h_1:A_1\to C_2$ and morphisms $u_1:A_1\to B_1$, $r_1:B_1\to A_1$,
$v_2:A_2\to C_2$, $s_2:C_2\to A_2$ such that $u_1$, $r_1$ make $f_0$ a retract of $g_0$ in $A_0\backslash\ck$ and $v_2$, $s_2$ make $f_1$
a retract of $h_1$ in $A_1\backslash\ck$. Consider a pushout
$$
\xymatrix@=3pc{
B_1 \ar[r]^{g_1} & B_2 \\
A_1 \ar [u]^{u_1} \ar [r]_{f_1} &
A_2 \ar[u]_{u_2}
}
$$
Since $h_1r_1u_1=h_1=v_2f_1$, there is the unique morphism $t:B_2\to C_2$ such that $tg_1=h_1r_1$ and $tu_2=v_2$. It is easy to see that
$u_2$ and $s_2t$ make $f_1$ a retract of $g_1$. Thus $f_1f_0$ is a retract of $g_1g_0$. Consequently, cofibrations are closed under transfinite
composite.
\end{proof}

We say that a cellular category $\ck$ is \textit{retract closed} if $\cof(\ck)=\cell(\ck)$. Following \ref{le2.2}, $(\ck,\cof(\ck))$ is
a reflection of a cellular category $\ck$ into retract closed cellular categories. 

Given a set of morphisms in a cocomplete category $\ck$ then $\cof(\cx)$ denotes the closure of $\cx$ under pushout, transfinite composite
and retract. In fact, $\cof(\cx)$ consists of retracts of transfinite composites of pushouts of morphisms from $\cx$. We say that $\cof(\cx)$
is \textit{cofibrantly generated} by $\cx$. 

\begin{defi}\label{def2.3}
{\em  
A retract closed cellular category $\ck$ is called \textit{combinatorial} if $\ck$ is locally presentable and $\cof(\ck)$ is cofibrantly generated
by a set of morphisms from $\ck$.  
}
\end{defi}

In a combinatorial category $\ck$, the class of cofibrations forms a left part of a weak factorization system. Following \cite{L} A.1.5.12, 
any combinatorial category is cellularly generated by a set of morphisms. 

An object $K$ is called \textit{cofibrant} if the unique morphism $0\to K$ from an initial object is a cofibration. Analogously we define
\textit{cellular} objects.
 
\begin{exam}\label{ex2.4}
{
\em
Any cocomplete category carries two cellular structures -- the \textit{discrete} $\ck_d=(\ck,\Iso)$ and the \textit{trivial} $\ck_t=(\ck,\ck^2)$.
They are both retract closed. Moreover, if $\ck$ is locally presentable, they are both combinatorial. This is evident for the discrete one.
Let $\ck$ be a locally $\kappa$-presentable category. Then the trivial structure is combinatorial because $\ck^2$ is cofibrantly generated 
by morphisms between $\kappa$-presentable objects. (see \cite{R}, 4.6 for $\kappa=\omega$ but the general case is the same). 
}
\end{exam}

A morphism $(u,v):g\to f$ in $\ck^2$ will be called a \textit{pushout morphism} if the square
$$
\xymatrix@=3pc{
A \ar[r]^{f} & B \\
C \ar [u]^{u} \ar [r]_{g} &
D \ar[u]_{v}
}
$$
is a pushouts. Let $\psh(\ck^2)$ be the subcategory of $\ck^2$ with the objects as $\ck^2$ and with the pushout morphisms.

\begin{lemma}\label{le2.5}
Let $\ck$ be a locally $\kappa$-presentable category and $\cx$ a set of morphisms between $\kappa$-presentable objects. 
Then the full subcategory of $\psh(\ck^2)$ on objects belonging to $\Po(\cx)$ is locally $\kappa$-presentable 
with $\Po_\kappa(\cx)$ being the full subcategory of $\kappa$-presentable objects.
\end{lemma}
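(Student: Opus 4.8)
The plan is to identify the category in question — call it $\cp$ — with the category of elements of a $\kappa$-accessible $\Set$-valued functor on $\ck$. First I would look at the functor $D\colon\cp\to\ck$ sending an arrow to its domain and a pushout morphism $(u,v)$ to $u$. If $f\in\Po(\cx)$ and $u\colon\dom f\to A'$ is any morphism of $\ck$, then the pushout $f'$ of $f$ along $u$ again lies in $\Po(\cx)$ — by the pasting law it is a pushout of the same member of $\cx$ of which $f$ is one — and there is a unique pushout morphism $f\to f'$ lying over $u$. Hence $D$ is a discrete opfibration, so $\cp$ is equivalent to the category of elements of the functor $F\colon\ck\to\Set$ with $F(A)=\{\,f\in\Po(\cx):\dom f=A\,\}$ and $F(u)=$ pushout along $u$. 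Since $\cx$ is a set, each $f\in F(A)$ is a pushout of some $g\in\cx$ along some morphism $\dom g\to A$, so $F(A)$ is a quotient of the set $\coprod_{g\in\cx}\ck(\dom g,A)$; in particular $F$ is genuinely $\Set$-valued.

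The substance is to show that $F$ preserves $\kappa$-filtered colimits. Let $A=\colim_{i}A_{i}$ be $\kappa$-filtered, with cocone $a_{i}\colon A_{i}\to A$. For the surjectivity of $\colim_{i}F(A_{i})\to F(A)$: an $f\in F(A)$ is a pushout of some $g\in\cx$ along $u\colon\dom g\to A$; as $\dom g$ is $\kappa$-presentable, $u=a_{i}u_{i}$ for suitable $i$ and $u_{i}\colon\dom g\to A_{i}$, and the pushout of $g$ along $u_{i}$ lies in $F(A_{i})$ and maps to $f$. For injectivity: if $f_{i}\in F(A_{i})$ and $f_{j}\in F(A_{j})$ become equal in $F(A)$, present them as pushouts of members $g,g'$ of $\cx$ along $u_{i},u_{j}'$; the assertion that the two resulting pushouts coincide over $A$ is carried by a finite diagram whose vertices, apart from $A_{i},A_{j},A$, are the $\kappa$-presentable objects $\dom g,\cod g,\dom g',\cod g'$, and such a diagram already lives over some $A_{k}$ by $\kappa$-filteredness; so $f_{i}$ and $f_{j}$ agree in $F(A_{k})$. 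Hence $F$ is $\kappa$-accessible.

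The rest is comparatively formal. The projection $D$ creates connected colimits from $\ck$ (a compatible element of $F$ of the colimit is obtained by transporting any one of the given elements along a colimit injection), and together with the existence of coproducts — a routine verification in this setting — this makes $\cp$ cocomplete, while $\kappa$-accessibility of $F$ forces $\kappa$-filtered colimits in $\cp$ to be computed by $D$. Hence an object $(A,f)$ of $\cp$ is $\kappa$-presentable precisely when $A$ is $\kappa$-presentable in $\ck$, that is, precisely when $f\in\Po_{\kappa}(\cx)$; and every $(A,f)$ is the $\kappa$-filtered colimit of the canonical diagram $(A_{i},f_{i})$, where $A=\colim A_{i}$ ranges over $\kappa$-presentable objects and $f_{i}\in F(A_{i})$ is a preimage of $f$ (which exists by the surjectivity above). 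Thus $\Po_{\kappa}(\cx)$ is a strong generator of $\cp$ consisting of $\kappa$-presentable objects, and $\cp$ is locally $\kappa$-presentable with $\Po_{\kappa}(\cx)$ as its full subcategory of $\kappa$-presentable objects.

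The step I expect to be the real obstacle is the injectivity half of the $\kappa$-accessibility of $F$: one must make precise the sense in which ``$f$ is the pushout of $g$ along $u$'' is detected on the $\kappa$-presentable data $g$ and $u$, so that two presentations which become identified over the $\kappa$-filtered colimit $A$ are already identified at a finite stage. Everything else — the discrete opfibration picture, the formal properties of categories of elements, and the computation of the $\kappa$-presentable objects — is then routine.
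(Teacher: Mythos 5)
There is a genuine gap, and it is located exactly where you did not expect it: not in the accessibility computation for $F$, but in the structural claim it rests on. The domain projection $D\colon\cp\to\ck$ is \emph{not} a discrete opfibration. Given $f\in\Po(\cx)$ and $u\colon\dom f\to A'$, a lift of $u$ at $f$ is a pushout square $(u,v)\colon f\to f'$, and neither $f'$ nor $v$ is unique: $f'$ is only determined up to isomorphism, and even for a fixed $f'$ there may be several admissible $v$. Concretely, take $\ck=\Set$, $\kappa=\omega$, $\cx=\{\emptyset\to 2\}$, $f=(\emptyset\to 2)$, $u\colon\emptyset\to A'$, and $f'$ the coproduct injection $A'\to A'\coprod 2$: the two injective maps $2\to A'\coprod 2$ with image the second summand both give pushout squares over $u$, so there are two distinct morphisms $f\to f'$ in $\cp$ over $u$. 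Structurally, the fiber of $D$ over $A$ is the groupoid of objects of $\Po(\cx)$ with domain $A$ and isomorphisms under $A$; its objects have nontrivial automorphism groups (the swap of $2$ under $\emptyset$ above), so the fibers are not even equivalent to discrete categories. Moreover your $F(A)$, as defined, contains all isomorphic copies of the codomain and is a proper class; and after passing to isomorphism classes to make it a set, the canonical comparison $\cp\to\int F$ is essentially surjective and full but \emph{not faithful}, so $\cp$ is not equivalent to the category of elements of any $\Set$-valued functor. Your surjectivity/injectivity argument only controls the isomorphism classes in the fibers ($\pi_0$), and says nothing about the extra morphisms (under-$A$ automorphisms and non-unique comparison maps) that $\cp$ genuinely has; hence the third paragraph — creation of connected colimits by $D$, the identification of the $\kappa$-presentable objects, and the canonical-diagram argument (where, additionally, your fiberwise ``preimages'' $f_i$ are not shown to form a compatible diagram) — is not established.

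A repair along your lines would require replacing $F$ by a groupoid-valued pseudofunctor and proving an accessibility result for its Grothendieck construction, including that isomorphisms and automorphisms over a $\kappa$-filtered colimit descend to some stage; that is considerably more machinery than what you wrote, and more than the statement needs. The paper's proof avoids the classification entirely: it observes that the subcategory is cocomplete with colimits computed in $\ck^2$ and that objects of $\Po_\kappa(\cx)$ are $\kappa$-presentable there, and then, for $f\in\Po(\cx)$ presented as a pushout of $g\in\cx$ along $u\colon X\to A$, it writes $A=\colim A_j$ as a $\kappa$-directed colimit of $\kappa$-presentable objects, factors $u$ through some $A_i$, and checks directly that the pushouts $g_j$ of $g$ along the induced $u_j\colon X\to A_j$ (for $j\geq i$) form a $\kappa$-directed diagram in $\psh(\ck^2)$ with colimit $f$. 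You should redo the argument in that direct form, or at least at the level of the opfibration with groupoid fibers rather than a $\Set$-valued functor.
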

\begin{proof}
Our category is clearly cocomplete with colimits calculated in $\ck^2$ and objects from $\Po_\kappa(\cx)$ are 
$\kappa$-presentable. Thus it suffices to prove that any morphism in $\Po(\cx)$ is a $\kappa$-directed colimit 
in $\psh(\ck^2)$ of morphisms belonging to $\Po_\kappa(\cx))$.

Let $f:A\to B$ be a morphism in $\Po(\cx)$. Thus there is a pushout
$$
\xymatrix@=3pc{
A \ar[r]^{f} & B \\
X \ar [u]^{u} \ar [r]_{g} &
Y \ar[u]_{v}
}
$$
where $g\in\cx$. We can express $A$ as a $\kappa$-directed colimit $(h_i:A_i\to A)_{i\in I}$ of $\kappa$-presentable
objects. Let $(h_{ij}:A_i\to A_j)_{i\leq j\in I}$ denote the corresponding diagram. Since $X$ is $\kappa$-presentable, 
there is a factorization
$$
u:X \xrightarrow{\ u_i \ }  A_i \xrightarrow{\ h_i \ } A
$$
for some $i\in I$. Let $u_j=h_{ij}u_i$ for $i\leq j\in I$. Form pushouts 
$$
\xymatrix@=3pc{
A_j \ar[r]^{g_j} & B_j \\
X \ar [u]^{u_j} \ar [r]_{g} &
Y \ar[u]_{v_j}
}
$$ 
We get the diagram
$$
\xymatrix@C=3pc@R=3pc{
A \ar [r]^{f}  & B \\
A_j \ar[r]^{g_j} \ar [u]^{h_j}  & B_j \ar [u]_{k_j}\\
X \ar [r]_g \ar [u]^{u_j} & Y \ar [u]_{v_j}
}
$$ 
where the lower square and the outer rectangle are pushouts and $k_j$ is the induced morphism. Thus the upper square 
is the pushout. Hence $f$ is a $\kappa$-directed colimit in $\psh(\ck^2)$ of morphisms $g_j:A_j \to B_j$ belonging to $\Po_\kappa(\cx)$.
\end{proof} 

\begin{rem}\label{re2.6}
{
\em
For a class $\cx$ of arrows in a given cocomplete category $\ck$, and an ordinal $\lambda$, let us denote by
$\lambda$-$\sm(\cx)$ the category whose objects are the smooth chains in $\ck$ of length $\lambda$, and whose links are
in $\cx$. Recall that these are the chains $(a_{ij}:A_i \to A_j)_{i\leq j< \lambda}$ such that $(a_{ij}:A_i\to A_j)_{i<j}$
is a colimit for any limit ordinal $j<\lambda$ and $a_{i,i+1}\in\cx$ for each $i+1 <\lambda$. A morphism 
$(h_i):(a_{ij})\to (b_{ij})$ of smooth chains will be called a \textit{pushout morphism} if all squares
$$
\xymatrix@=3pc{
A_i \ar[r]^{a_{ij}} & A_j \\
B_i \ar [u]^{h_i} \ar [r]_{b_{ij}} &
B_j \ar[u]_{h_j}
}
$$
are pushouts. Note that the smoothness implies that it is sufficient to require the condition for $j=i+1$. Let
$\lambda$-$\smp(\cx)$ be the subcategory of $\lambda$-$\sm(\cx)$ with the objects as $\lambda$-$\sm(\cx)$ and
with the pushout morphisms. In particular, $1$-$\sm(\ck^2)=1$-$\smp(\ck^2)=\ck$, $2$-$\sm(\ck^2)=\ck^2$ 
and $2$-$\smp(\ck^2)=\psh(\ck^2)$. By recursion, we can generalize \ref{le2.5} to any ordinal $0<\lambda<\kappa$:

Let $\ck$ be a locally $\kappa$-presentable category, $\cx$ a set of morphisms between $\kappa$-presentable objects
and $0<\lambda<\kappa$ an ordinal. Then any chain in $\lambda$-$\smp(\Po(\cx))$ is a $\kappa$-directed colimit 
in $\lambda$-$\smp(\Po(\cx))$ of chains belonging to $\lambda$-$\smp(\Po_\kappa(\cx))$.

This statement can be used for proving the following result:

Let $\ck$ be a locally $\kappa$-presentable category and $\cx$ a set of morphisms between $\kappa$-presentable objects. 
Then
$$
\Tcsh\kappa\Po(\cx)=\Po\Tcsh\kappa\Po_\kappa(\cx).
$$ 

This result was proved in \cite{MRV} 4.20 using good colimits.
}
\end{rem}

\section{Limits of combinatorial categories}
A functor $F:\ck\to\cl$ between cellular categories will be called \textit{cellular} if it preserves colimits and cellular morphisms.
We will denote $\CAT$ the (illegitimate) 2-category of categories, functors and natural transformations and $\CELL$ the (illegitimate) 2-category 
of cellular categories, cellular functors and natural transformations. The forgetful 2-functor $U:\CELL\to\CAT$ has both a left 2-adjoint given 
by discrete cellular structures and a right 2-adjoint given by trivial ones. In particular, $U$ preserves all existing 2-limits. We are not
really interested in 2-limits in $\CELL$ but, for what follows, it is instructive to calculate pseudopullbacks.

We recall that a pseudopullback of functors $F$ and $G$ is a square in $\CAT$
$$
\xymatrix@=4pc{
\cp \ar [r]^{\bar{F}} \ar [d]_{\bar{G}}& \cl \ar [d]^{G}\\
\ck\ar [r]_{F}& \cm
}
$$
which commutes up to an isomorphism and has the 2-categorical universal property among such squares. Objects of the category $\cp$ are
triples $(K,L,t)$ where $t:FK\to GL$ is an isomorphism and morphisms $(K_1,L_1,t_1)\to (K_2,L_2,t_2)$ are pairs $(u,v)$ where $u:K_1\to K_2$,
$v:L_1\to L_2$ such that $t_2F(u)=G(v)t_1$. The functors $\bar{F}$, $\bar{G}$ are the projections and $t$'s yield the desired natural
isomorphism $F\bar{G}\to G\bar{F}$. 

Given $\cc\subseteq\ck^2$ and $\cd\subseteq\cl^2$, we get the class
$$
\Ps(\cc,\cd)=\{f\in\cp^2|\overline{F}f\in\cd,\overline{G}f\in\cc\}
$$
of morphisms in $\cp$.

\begin{lemma}\label{le3.1} $\CELL$ has pseudopullbacks.
\end{lemma}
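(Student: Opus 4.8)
The plan is to take the pseudopullback $\cp$ already constructed in $\CAT$ (with projections $\bar F\colon\cp\to\cl$ and $\bar G\colon\cp\to\ck$ and the structural isomorphism $F\bar G\cong G\bar F$), to equip it with the class
$$
\cell(\cp)=\Ps(\cell(\ck),\cell(\cl))=\{f\in\cp^2\mid \bar F f\in\cell(\cl),\ \bar G f\in\cell(\ck)\},
$$
and to verify that $(\cp,\cell(\cp))$ is a cellular category for which $\bar F$, $\bar G$ are cellular functors exhibiting $\cp$ as a pseudopullback of $F$ and $G$ in $\CELL$.

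First I would show that $\cp$ is cocomplete with colimits formed ``componentwise''. Given a diagram $D(j)=(K_j,L_j,t_j)$ in $\cp$, put $K=\colim_j K_j$ in $\ck$ and $L=\colim_j L_j$ in $\cl$; since $F$ and $G$ are cellular they preserve colimits, so $FK=\colim_j FK_j$ and $GL=\colim_j GL_j$, and the isomorphisms $t_j$ (which are natural in $j$ by the definition of morphisms in $\cp$) induce a unique isomorphism $t\colon FK\to GL$ compatible with the colimit cocones. A routine check shows that $(K,L,t)$ with the componentwise cocone is a colimit of $D$ in $\cp$; in particular $\bar F$ and $\bar G$ preserve all colimits.

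Next I would verify that $\cell(\cp)$ makes $\cp$ cellular. It contains all isomorphisms, because $\bar F,\bar G$ send isomorphisms to isomorphisms and isomorphisms lie in $\cell(\cl)$, $\cell(\ck)$. For closure under pushout: $\bar F,\bar G$ preserve pushouts, so if $f\in\cell(\cp)$ and $f'$ is a pushout of $f$ in $\cp$, then $\bar F f'$ is a pushout of $\bar F f\in\cell(\cl)$, hence lies in $\cell(\cl)$, and likewise $\bar G f'\in\cell(\ck)$; thus $f'\in\cell(\cp)$. Closure under transfinite composite is the same argument: a transfinite composite in $\cp$ of morphisms from $\cell(\cp)$ is sent by $\bar F$ to a transfinite composite of morphisms from $\cell(\cl)$ (using that $\bar F$ preserves the colimits defining the smooth chain), hence lands in $\cell(\cl)$, and symmetrically for $\bar G$. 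So $(\cp,\cell(\cp))$ is cellular, and by the very definition of $\cell(\cp)$ the projections $\bar F\colon\cp\to\cl$ and $\bar G\colon\cp\to\ck$ are cellular functors.

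Finally I would check the $2$-categorical universal property. Let $\cq$ be a cellular category with cellular functors $P\colon\cq\to\ck$, $Q\colon\cq\to\cl$ and a natural isomorphism $FP\cong GQ$. The universal property of $\cp$ in $\CAT$ produces a functor $H\colon\cq\to\cp$ with $\bar G H=P$ and $\bar F H=Q$ over the given data; $H$ preserves colimits since $P,Q$ do and colimits in $\cp$ are componentwise, and $H$ carries $\cell(\cq)$ into $\cell(\cp)$ since $\bar G Hg=Pg\in\cell(\ck)$ and $\bar F Hg=Qg\in\cell(\cl)$ for $g\in\cell(\cq)$; hence $H$ is cellular. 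Uniqueness of $H$ and the $2$-dimensional part of the universal property are inherited from $\CAT$, as $U\colon\CELL\to\CAT$ is faithful and puts no extra condition on natural transformations. I do not expect a real obstacle: the only place where the cellular hypothesis is used rather than pure category theory is that $F$ and $G$ preserve colimits, which is exactly what makes colimits in $\cp$ componentwise and thereby drives every step of the argument.
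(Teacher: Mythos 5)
Your proposal is correct and follows exactly the paper's route: the paper's proof consists of taking the pseudopullback $\cp$ in $\CAT$ and declaring $\cell(\cp)=\Ps(\cell(\ck),\cell(\cl))$, leaving the verifications implicit. Your write-up simply supplies those routine checks (componentwise colimits via colimit preservation of $F,G$, closure properties detected by the projections, and the universal property inherited from $\CAT$), all of which are sound.
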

\begin{proof}
Let $F:\ck\to\cm$ and $G:\cl\to\cm$ be cellular functors. It suffices to put
$$
\cell(\cp)=\Ps(\cell(\ck),\cell(\cl)).
$$
\end{proof} 

We will denote $\LOC$ the 2-category of locally presentable categories, colimit preserving functors and natural transformations. This
is a legitimate category which is not locally small. Recall that any colimit preserving functor between locally presentable categories has
a left adjoint. $\LOC$ has all PIE-limits, which means products, inserters and equifiers. Consequently, it has all pseudolimits, in particular 
it has pseudopullbacks. This basic result was proved in \cite{Bi} and follows from a more general limit theorem for accessible categories 
(see \cite{MP}) where one can find all needed concepts (see also \cite{AR}).  

A functor $F:\ck\to\cl$ between combinatorial categories will be called \textit{combinatorial} if it preserves colimits and cofibrations.
$\COMB$ will denote the 2-category of combinatorial categories, combinatorial functors and natural transformations. Again, this category
is legitimate but not locally small and the forgetful 2-functor $V:\COMB\to\LOC$ has both a left 2-adjoint and right 2-adjoint given 
by discrete and trivial combinatorial structures. Thus $V$ preserves all existing 2-limits.  Moreover $\COMB$ is a full sub-2-category
of $\CELL$.

\begin{theo}\label{th3.2} $\COMB$ has pseudopullbacks calculated in $\CELL$.
\end{theo}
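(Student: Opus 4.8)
The plan is to show that the pseudopullback $\cp$ produced in Lemma~\ref{le3.1}, carrying the cellular structure $\cell(\cp)=\Ps(\cell(\ck),\cell(\cl))$, already lies in $\COMB$, and that its $\CELL$-universal property then restricts to a $\COMB$-universal property. Thus four things must be checked: (i) the underlying category of $\cp$ is locally presentable; (ii) $\cp$ is retract closed; (iii) $\cof(\cp)$ is cofibrantly generated by a set; (iv) the comparison functor out of any $\COMB$-cone over the cospan is combinatorial.

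Points (i), (ii), (iv) are routine. For (i): the underlying functors of the combinatorial functors $F$ and $G$ preserve colimits, so the underlying category of $\cp$ is the pseudopullback of $F,G$ in $\LOC$, which is locally presentable by \cite{Bi}; along the way one notes that colimits in $\cp$ are computed componentwise (since $F,G$ preserve them), so that $\overline F$ and $\overline G$ preserve colimits. For (ii): $\overline F\colon\cp\to\cl$ and $\overline G\colon\cp\to\ck$ are functors, hence carry retract diagrams in $\cp^{2}$ to retract diagrams; since $\cof(\ck)=\cell(\ck)$ and $\cof(\cl)=\cell(\cl)$ are retract closed, a retract of a morphism in $\Ps(\cell(\ck),\cell(\cl))$ lies again in $\Ps(\cell(\ck),\cell(\cl))$, i.e.\ $\cof(\cp)=\cell(\cp)$. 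For (iv): a $\COMB$-cone over $F,G$ is in particular a $\CELL$-cone, so by Lemma~\ref{le3.1} it factors through a unique cellular functor $H$ into $\cp$; but $H$ is simultaneously the comparison functor into the $\LOC$-pseudopullback of the underlying cospan, so it preserves colimits, and it preserves cofibrations because a morphism of $\cp$ is a cofibration exactly when both its projections are; hence $H$ is combinatorial, and the $2$-dimensional uniqueness is inherited since $\COMB$ is full in $\CELL$. Everything reduces to (iii).

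For (iii), fix a regular cardinal $\kappa$ such that $\ck,\cl,\cm$ are locally $\kappa$-presentable, $F$ and $G$ preserve $\kappa$-presentable objects and $\kappa$-directed colimits, and $\cof(\ck)$, $\cof(\cl)$ are cofibrantly generated by sets of morphisms between $\kappa$-presentable objects. Then $\cp$ is locally $\kappa$-presentable and its $\kappa$-presentable objects are the triples $(K,L,t)$ with $K$ and $L$ $\kappa$-presentable. Let $\cz$ be the (essentially small) class of morphisms of $\cp$ that lie in $\cell(\cp)$ and have $\kappa$-presentable domain and codomain; the claim to prove is $\cell(\cp)=\cof(\cz)$. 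The inclusion $\cof(\cz)\subseteq\cof(\cp)=\cell(\cp)$ is immediate. For the reverse inclusion one must present an arbitrary $f\in\cell(\cp)$ — equivalently, a compatible pair $\overline G f\in\cell(\ck)$, $\overline F f\in\cell(\cl)$ — as a $\kappa$-directed colimit, taken along pushout morphisms so that cellularity is preserved in the sense of Remark~\ref{re2.6}, of morphisms from $\cz$. On each component separately such presentations are supplied by the good-colimit machinery of \cite{MRV}: a cellular morphism of $\ck$ is a $\kappa$-directed colimit along pushout morphisms of $\kappa$-small cellular morphisms between $\kappa$-presentable objects, and likewise in $\cl$ — this is the identity $\Tcsh{\kappa}\Po(\cx)=\Po\Tcsh{\kappa}\Po_\kappa(\cx)$ recalled in Remark~\ref{re2.6}. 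It then remains to run these two $\kappa$-directed presentations coherently, so that the $\kappa$-presentable stages chosen on the $\ck$-side and on the $\cl$-side are linked by isomorphisms over $\cm$ into $\kappa$-presentable objects of $\cp$, producing the desired $\kappa$-directed presentation of $f$ by members of $\cz$; with that in hand, $\cell(\cp)=\cof(\cz)$ and $\cp$ is combinatorial.

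I expect this last synchronization to be the main obstacle. Two independently chosen $\kappa$-directed cellular presentations of $\overline G f$ and of $\overline F f$ need not be compatible across the structure isomorphism $t$ over $\cm$, so both must be extracted from a single good system of $\kappa$-small cells — exactly the device introduced by Lurie \cite{L} and developed in \cite{M}, there used to lift cellular structure along a limit-type (functor-category) construction, which is the analogous situation. Granting the coherent presentation, $\cp$ is combinatorial, and combined with (i), (ii) and (iv) it is the pseudopullback of $F$ and $G$ in $\COMB$, computed as the pseudopullback in $\CELL$.
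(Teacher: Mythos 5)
Your reduction is the right one and matches the paper: the only substantive point is that $\cell(\cp)=\Ps(\cof(\ck),\cof(\cl))$ is generated by the set $\cx$ of morphisms of $\cp$ between $\kappa$-presentable objects whose two projections are cofibrations. But the proposal stops exactly where the proof begins. The ``synchronization'' that you flag as the main obstacle and then grant (``Granting the coherent presentation\dots'') is the entire content of the theorem; nothing in \cite{MRV} or Remark~\ref{re2.6} hands it to you, because those results are stated one category at a time and give two unrelated good presentations of $\overline{G}e$ and $\overline{F}e$, with no compatibility across the isomorphism over $\cm$. The paper supplies this by a two-stage argument: first it proves
$\Ps(\cof(\ck),\cof(\cl))=\Tc\,\Ps(\Po\cof_\kappa(\ck),\Po\cof_\kappa(\cl))$
by taking $\kappa$-good $\kappa$-directed diagrams $D$, $E$ (\cite{MRV} 4.11) presenting the two projections and running a transfinite back-and-forth over initial segments $X_i\subseteq P$, $Y_i\subseteq Q$, at each stage using $\kappa$-presentability of $GB$, $GA$ to factor through the $\kappa$-directed colimit cocone on the other side and produce, in the limit of an $\omega$-zig-zag, a new isomorphism $u_{i+1}$ over $\cm$; \cite{MRV} 4.19 guarantees the resulting links lie in $\Po\cof_\kappa$. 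Second, it proves the link-level identity $\Ps(\Po\cof_\kappa(\ck),\Po\cof_\kappa(\cl))=\Po\,\Ps(\cof_\kappa(\ck),\cof_\kappa(\cl))$ by another countable zig-zag inside $\psh(\ck^2)$ and $\psh(\cl^2)$, using Lemma~\ref{le2.5}. Neither construction appears in your proposal, so as it stands the argument is incomplete at its crucial point.

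A smaller but real inaccuracy: you propose to present an arbitrary $f\in\cell(\cp)$ ``as a $\kappa$-directed colimit, taken along pushout morphisms, of morphisms from $\cz$.'' A $\kappa$-directed colimit in $\psh(\cp^2)$ of morphisms between $\kappa$-presentable objects is a pushout of any single one of them, so such a presentation would force $f\in\Po(\cz)$ --- false for general cellular morphisms (already a transfinite composite of infinitely many attachments is not a pushout of one small cell). The correct shape, as above, is a transfinite composite of links each of which is a pushout of a member of $\cz$; the $\kappa$-directed-colimit device of Lemma~\ref{le2.5} enters only at the level of a single link. Your points (i), (ii), (iv) are fine and are indeed treated as routine in the paper.
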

\begin{proof}
Consider a pseudopullback in $\CELL$
$$
\xymatrix@=4pc{
\cp \ar [r]^{\bar{F}} \ar [d]_{\bar{G}}& \cl \ar [d]^{G}\\
\ck\ar [r]_{F}& \cm
}
$$
where $F$ and $G$ are combinatorial functors. We have to show that $\cp$ is combinatorial, i.e., that $\Ps(\cof(\ck),\cof(\cl))$ is
cofibrantly generated by a set of morphisms.

There is an uncountable regular cardinal $\kappa$ such that the categories $\ck,\cl,\cm$ are locally $\kappa$-presentable, both $\cof(\ck)$
and $\cof(\cl)$ are cofibrantly generated by morphisms between $\kappa$-presentable objects, $\cp$ is locally $\kappa$-presentable and the functors 
$F,G,\overline{F},\overline{G}$ preserve $\kappa$-filtered colimits and $\kappa$-presentable objects. Following \cite{L} A.1.5.12, 
both $\cof(\ck)$ and $\cof(\cl)$ are cellularly generated by morphisms between $\kappa$-presentable objects. Let $\cx$ consist of morphisms $f$ between 
$\kappa$-presentable objects in $\cp$ such that $\overline{F}f\in\cof(\cl)$ and $\overline{G}f\in\cof(\ck)$. 
We will prove that
$$
\cell(\cx)=\Ps(\cof(\ck),\cof(\cl)),
$$
which proves the theorem. Let $\cof_\kappa(\ck)$ denote cofibrations between $\kappa$-presentable objects in $\ck$ and the same for $\cof_\kappa(\cl)$.
Then 
$$
\cx=\Ps(\cof_\kappa(\ck),\cof_\kappa(\cl)).
$$ 
Thus it suffices to prove the equations   
\begin{align*}
\Ps(\cof(\ck),\cof(\cl))&=\Tc\Ps(\Po\cof_\kappa(\ck),\Po\cof_\kappa(\cl))\\
&=\Tc\Po\Ps(\cof_\kappa(\ck),\cof_\kappa(\cl)).
\end{align*}

In the first equation, the right-hand side is obviously included in the left-hand side. Let $e$ belong to the left-hand side of the first equation.
Following \cite{MRV} 4.11, there are $\kappa$-good $\kappa$-directed diagrams $D:P\to\ck$ and $E:Q\to\cl$ with links in $\Po\cof_\kappa(\ck)$
and $\Po\cof_\kappa(\cl)$ resp. such that $\overline{F}e$ is the composite of $E$ and $\overline{G}e$ is the composite 
of $D$. Thus there are isomorphisms $u$ and $v$ in $\cm$ such that the square
$$
\xymatrix@=4pc{
FD\perp \ar [r]^{F\overline{G}e} \ar [d]_{u}& F\colim D \ar [d]^{v}\\
GE\perp\ar [r]_{G\overline{F}e}& G\colim E
}
$$
commutes. In what follows, $\delta:D\to\colim D$ and $\varepsilon:E\to\colim E$ are colimit cocones. We can assume that neither $P$ nor $Q$ have
the greatest element.

Let $\tilde{P}$ denote the set of all non-empty initial segments $X$ of $P$. 
%containing any limit element $x\in P$ such that $\downdownarrows x\subseteq X$.
%For any initial segment $X$ of $P$, there is the smallest initial segment $\overline{X}$ from $\tilde{P}$ containing $X$. Moreover, the restriction
%of $FD$ on $\overline{X}$ has the same colimit as the restriction of $FD$ on $X$. 
For each $X\in\tilde{P}$, we get the induced morphism  $\delta_X:\colim D_X\to\colim D$ where $D_X$ denotes the restriction of $D$ on $X$. 
Analogously, we have $\varepsilon_Y:\colim E_Y\to\colim E$ for $Y\in\tilde{Q}$.
Given $X\in\tilde{P}$ and $x\in P$, let $X(x)=X\cup\downarrow x$. We are going to show that $X\neq P$ implies $X(x)\neq P$. Since $P$ is directed
and does not have the greatest element, there is $x<y\in P$. Clearly, either $y\notin X(x)$ or $X(x)=X$.
 
By recursion on all ordinals $i$ and $j$, $i<j$, we will construct smooth chains $X_i\subseteq X_j$ in $\tilde{P}$, $Y_i\subseteq Y_j$ in $\tilde{Q}$ and 
$$
(k_{ij},l_{ij}):(K_i,L_i,u_i)\to (K_j,L_j,u_j)
$$ 
in $\cp$ such that each $K_i$ is a colimit of the restriction of $D$ on an initial segment $X_i\in\tilde{P}$,
each $L_i$ is a colimit of the restriction of $E$ on an initial segment $Y_i\in\tilde{Q}$, $k_{ij}:K_i\to L_i$, $l_{ij}:L_i\to L_j$
are the induced morhisms, and, for the induced morphisms $k_i:K_i\to\colim D$, $l_i:L_i\to\colim E$, the square
$$
\xymatrix@=4pc{
FK_i \ar [r]^{Fk_i} \ar [d]_{u_i}& F\colim D \ar [d]^{v}\\
GL_i\ar [r]_{Gl_i}& G\colim E
}
$$
commutes. The construction will terminate at the ordinal $k$ when both $X_k=P$ and $Y_k=Q$ become true. It follows easily from \cite{MRV} 4.19
that each morphism $k_{ij}$ is in $\Po\cof_\kappa(\ck)$, and similarly $l_{ij}\in\Po\cof_\kappa(\cl)$, thus the smooth chain  
$(k_{ij},l_{ij})_{i\leq j<k}$ has links in $\Ps(\Po\cof_\kappa(\ck),\Po\cof_\kappa(\cl))$. In this way we get 
that $e$ belongs to the right-hand side of the first equation.

We put $K_0=D\perp$, $L_0=E\perp$ and $u_0=u$. Let us have $(K_i,L_i,u_i)$. If $X_i=P$ and $Y_i=Q$, we are finished, and we put $k=i$. Otherwise,
either $Y_i\neq Q$, or $X_i\neq P$. Assume the first case, the second is handled symmetrically. Choose $y_1\in Q-Y_i$
and put $Y_{i1}=Y_i(y_1)$, $L_{i1}=\colim E_{Y_{i1}}$. Following \cite{MRV} 4.19, 
%$E(\perp y_1)\in\Po(\cof_\kappa(\ck))$. Thus there is a pushout
%$$
%\xymatrix@=3pc{
%E\perp \ar[r]^{E(\perp y_1)} & Ey_1 \\
%A \ar [u]^{} \ar [r]_{h} &
%B \ar[u]_{}
%}
%$$
%where $h\in\cof_\kappa(\cl)$. Hence the induced morphism $s_0:L_i\to L_{i1}$ belongs to $\Po\cof_\kappa(\cl)$ because it appears as the pushout 
the induced morphism $s_0:L_i\to L_{i1}$ belongs to $\Po\cof_\kappa(\cl)$. Let
$$
\xymatrix@=3pc{
L_i \ar[r]^{s_0} & L_{i1} \\
A \ar [u]^{a} \ar [r]_{h} &
B \ar[u]_{b}
}
$$
be a corresponding pushout with $h\in\cof_\kappa(\cl)$. 
Since $GB$ is $\kappa$-presentable and $P$ $\kappa$-directed, there is $x_0\in P$ and 
$$
f:GB\to FDx_0
$$ 
such that $v^{-1}G(\varepsilon_{Y_{i1}})G(b)=F(\delta_{x_0})f$. We obtain the morphisms
$$
f_1:GB \xrightarrow{\ f \ } FDx_0 \xrightarrow{\  \ } F\colim D_{X_i(x_0)}
$$
and
$$
f_2:GL_i \xrightarrow{\ u_i^{-1} \ } FK_i \xrightarrow{\  \ } F\colim D_{X_i(x_0)}
$$
Since
$$
F\delta_{X_i(x_0)}f_1G(h)=F\delta_{X_i(x_0)}f_2G(a),
$$
$F\delta_{X(x)}:F\colim D_{X(x)}\to F\colim D$ is a $\kappa$-directed colimit cocone and $GA$ is $\kappa$-presentable, there is $x_0\leq x_1\in P$
such that the morphisms
$$
g_1:GB \xrightarrow{\ f_1 \ }  F\colim D_{X_i(x_0)} \xrightarrow{\  \ } F\colim D_{X_i(x_1)}
$$
and
$$
g_2:GL_i \xrightarrow{\ g_1 \ } F\colim D_{X_i(x_0)} \xrightarrow{\  \ } F\colim D_{X_i(x_1)}
$$
satisfy $g_1G(h)=g_2G(a)$. We put $X_{i1}=X(x_1)$, $K_{i1}=\colim D_{X_{i1}}$ and $u_{i1}:GL_{i1}\to FK_{i1}$ is the induced morphism
from the pushout defining $GL_{i1}$. Following \cite{MRV} 4.19, the induced morphism $t_0:K_i\to K_{i1}$ belongs to $\Po\cof_\kappa(\ck)$. 
We have $u_{i1}G(s_0)u_i=F(t_0)$ and $v^{-1}G(\varepsilon_{Y_{i1}})=F(\delta_{X_{i1}})u_{i1}$. 

Now, in the same way as above, we get initial segments $X_{i2}\in\tilde{P}$ and $Y_{i2}\in\tilde{Q}$, the objects $K_{i2}=\colim D_{X_{i2}}$ 
and $L_{i2}=\colim E_{Y_{i2}}$ and morphisms $t_1:K_{i1}\to K_{i2}$ in $\Po\cof_\kappa(\ck)$, $s_1:L_{i1}\to L_{i2}$ in $\Po\cof_\kappa(\cl)$ and 
$u_{i2}:FK_{i2}\to GL_{i2}$ such that $vF(\delta_{X_{i2}})=G(\varepsilon_{Y_{i2}})u_{i2}$ and $u_{i2}F(t_1)u_{i1}=G(s_1)$. We have
$$
u_{i2}F(t_1t_0)=u_{i2}F(t_1)u_{i1}G(s_0)u_i=G(s_1s_0)u_i.
$$
Continuing this procedure, we get morphisms $u_{in}$ with alternating directions, whose squares with $v$ or $v^{-1}$ commute and all squares
between $u_{in}$'s for odd $n$  and between $u_{in}$'s  for even $n$ commute. We put $X_{i+1}=\cup X_{in}$, $Y_{i+1}=\cup Y_{in}$, 
$K_{i+1}=\colim D_{X_{i+1}}$, $L_{i+1}=\colim E_{Y_{i+1}}$ and $u_{i+1}=\colim u_{i(2n)}$. Clearly, $u_{i+1}$ is an isomorphism; its inverse 
is $\colim u_{i(2n+1)}$.

%Since $\kappa$ is uncountable and $P$ is $\kappa$-directed, 
%there is $x\in P$ such that $x_n<x$ for each $n$. Assume that $x\in X_{i+1}$. Then $x\in X_{im}$ for some $m$, i.e., $x\in X_{in}$ for all $m\leq n$.
%Thus $X_{i+1}=X_{im}$ and we get that $X_{i+1}\neq P$. Analogously, we prove that $Y_{i+1}\neq Q$.
 
%Let us have $(K_j,L_j,u_j)$, for all $j<i$. Assume that $\cup X_j\neq P$ and $\cup Y_j\neq Q$. Then we put $X_i=\cup X_j$, $Y_i=\cup Y_j$,
%$K_i=\colim D_{X_i}$, $L_i=\colim E_{Y_i}$ and $u_i=\colim u_j$. Assume that $\cup X_j=X$ and put $Y'_i=\cup Y_j$, $L'_i=\colim E_{Y'_i}$
%and $u_i=\colim u_j$. Since $G(\varepsilon_{Y'_i})u_i=v$, $G(\varepsilon_{Y'_i})$ is an isomorphism and we put $Y_i=Q$ and $u_i=v$. Analogously,
%if $\cup Y_j=Q$ we put $X_i=P$.

The construction of the items at stage $i$ for $i$ a limit ordinal is dictated by the smoothness requirements. Clearly, there is an ordinal $k$
where the construction stops: $X_k=P$ and $Y_k=Q$. Since now $f_k$ is the identity on $\colim D$, and similarly for $g_k$, it follows that
$u_k=v$. Thus $e$ is the composite of the diagram $(k_{ij},l_{ij})_{i<k\leq k}$ as desired.

The second equation is the consequence of
$$
\Ps(\Po\cof_\kappa(\ck),\Po\cof_\kappa(\cl))=\Po\Ps(\cof_\kappa(\ck),\cof_\kappa(\cl)).
$$
In this equation, the right-hand side is obviously contained in the left-hand side. Let $e$ belong to the left-hand side.
Then $\overline{F}e:L_1\to L_2$ is a pushout of a morphism from $\cof_\kappa(\cl)$ and $\overline{G}e:K_1\to K_2$ is is 
a pushout of a morphism from $\cof_\kappa(\ck)$. There are isomorphisms $u$ and $v$ in $\cm$ such that the square
$$
\xymatrix@=4pc{
FK_1 \ar [r]^{F\overline{G}e} \ar [d]_{u}& FK_2 \ar [d]^{v}\\
GL_1\ar [r]_{G\overline{F}e}& GL_2
}
$$
commutes. 

Following \ref{le2.5}, $\overline{G}e$ is a $\kappa$-directed colimit in $\psh(\ck^2)$ of morphisms from $\cof_\kappa(\ck)$ 
and $\overline{F}e$ is a $\kappa$-directed colimit in $\psh(\cl^2)$ of morphisms from $\cof_\kappa(\cl)$. Let
$g_0:L_{10}\to L_{20}$ be a morphism from $\cof_\kappa(\cl)$, $(l_{10},l_{20}):g_0\to\overline{F}e$ a morphism
in $\psh(\cl^2)$, $f_0:K_{10}\to K_{20}$ a morphism from $\cof_\kappa(\ck)$ and $(k_{10},k_{20}):f_0\to\overline{G}e$ 
a morphism in $\psh(\ck^2)$. There is a morphism $g_1:L_{11}\to L_{21}$ from $\cof_\kappa(\cl)$ and a morphism
$(l_{11},l_{21}):g_1\to\overline{F}e$ with factorizations 
$$
(l_{10},l_{20})=(l_{11},l_{21})\cdot(l_{101},l_{201})
$$ 
and
$$
(u,v)\cdot(Fk_{10},Fk_{20})=(Gl_{11},Gl_{21})(u_1,v_1).
$$
There is a morphism $f_1:K_{11}\to K_{21}$ from $\cof_\kappa(\ck)$ and a morphism
$(k_{11},k_{21}):f_1\to\overline{G}e$ with factorizations 
$$
(k_{10},k_{20})=(k_{11},k_{21})\cdot(k_{101},k_{201})
$$ 
and
$$
(u^{-1},v^{-1})\cdot(Gl_{11},Gl_{21})=(Fk_{11},Fk_{21})(u_2,v_2).
$$
We continue this procedure and take colimits of the resulting chains $g=\colim g_n$ and $f=\colim f_n$. Since
$Gg$ and $Ff$ are isomorphic, we get a morphism $e'$ from $\Ps(\cof_\kappa(\ck),\cof_\kappa(\cl))$ and a pushout
morphism $e'\to e$. Therefore $e$ belongs to $\Po\Ps(\cof_\kappa(\ck),\cof_\kappa(\cl))$.
\end{proof}

\begin{coro}\label{cor3.3} $\COMB$ has PIE-limits calculated in $\CELL$.
\end{coro}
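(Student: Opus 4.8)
The plan is to reduce PIE-limits to the three generating constructions: products, inserters, and equifiers, and to handle each separately using the structure already developed for pseudopullbacks in Theorem \ref{th3.2}. Recall that $V\colon\COMB\to\LOC$ has both adjoints and hence preserves all existing 2-limits, and that $\LOC$ has all PIE-limits; so at the level of underlying locally presentable categories the limit exists and is computed in $\LOC$. The task is therefore to equip each such limit category with a cofibrantly generated cellular structure and to check that the projections are combinatorial and that the universal property holds in $\COMB$.

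First I would treat products. Given a family $(\ck_\iota,\cof(\ck_\iota))$ of combinatorial categories, the product $\prod_\iota\ck_\iota$ in $\LOC$ is the ordinary product category, and one sets $\cell$ of the product to be the class of morphisms that are cellular in each coordinate; equivalently, if $\cof(\ck_\iota)$ is cofibrantly generated by $\cx_\iota$, the product is cofibrantly generated by the morphisms that are an element of some $\cx_\iota$ in one coordinate and an identity elsewhere. Closure under pushout, transfinite composite and retract is coordinatewise, and local presentability of the product is standard; the projections are evidently combinatorial. Next, inserters and equifiers: an inserter of $F,G\colon\ck\to\cl$ has underlying category the comma-type category of objects $(K,\alpha\colon FK\to GK)$, and an equifier of 2-cells $\varphi,\psi\colon F\Rightarrow G$ is the full subcategory of $\ck$ on objects where $\varphi$ and $\psi$ agree. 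In each case the underlying locally presentable category is given by the corresponding $\LOC$-limit; one equips the inserter with $\cell=\{f\mid Uf\in\cof(\ck)\}$ (the forgetful functor to $\ck$ being a discrete-fibration-like functor that creates colimits), and the equifier inherits $\cof(\ck)$ restricted to the subcategory. The point is that both are, up to the maneuvers of the previous proof, built by the same ``lift a cofibrantly generated structure along a colimit-creating functor'' pattern.

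The cleanest route, and the one I would actually write, is to observe that every PIE-limit can be constructed from pseudopullbacks together with products, using the standard fact (see \cite{AR}, \cite{MP}) that inserters and equifiers are obtainable as pseudopullbacks (or iso-inserters, which are genuine pseudopullbacks) once one has products and cotensors with the arrow category $\mathbf 2$ and the free-living isomorphism; the latter two are themselves products-with-repetition-and-constraints that are handled by a transparent variant of the product argument above, or directly by Theorem \ref{th3.2}. Concretely: the iso-inserter of $F,G$ is the pseudopullback of $(F,G)\colon\ck\to\cl\times\cl$ along the diagonal $\cl\to\cl\times\cl$; equifiers reduce similarly. Thus every PIE-limit in $\COMB$ is a finite iteration of products and pseudopullbacks, each computed in $\CELL$ and each landing again in $\COMB$ by Theorem \ref{th3.2} and the product case, and the composite limit is therefore computed in $\CELL$ as claimed.

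The main obstacle is verifying that the $\CELL$-level construction of each intermediate limit genuinely has the universal property \emph{in $\COMB$} — i.e. that a cone of combinatorial functors into the diagram factors uniquely through a combinatorial functor — rather than merely in $\CELL$. This is where the full strength of Theorem \ref{th3.2} is needed for the pseudopullback steps: one must know not only that the pseudopullback taken in $\CELL$ happens to be combinatorial, but that no smaller cofibrantly generated structure could serve, so that the $\CELL$-universal property restricts correctly. For products this is immediate from the coordinatewise description; for the pseudopullback steps it is exactly the content of the equality $\cell(\cx)=\Ps(\cof(\ck),\cof(\cl))$ established above, which pins down the cellular structure on $\cp$ uniquely as $\Ps$ of the two and thereby forces the projections to detect combinatoriality. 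Once that is in hand, the iteration is formal.
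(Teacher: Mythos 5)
There is a genuine gap in the route you say you would actually write. You propose to reduce inserters and equifiers to products and pseudopullbacks, using the cotensor $[\mathbf{2},\cl]$ (and the free-living isomorphism) as auxiliary objects. Products and pseudopullbacks by themselves only produce limits whose 2-cell data is invertible, so the cotensor with $\mathbf{2}$ is carrying all the weight in your reduction of inserters; similarly, the strict equality $\varphi_K=\psi_K$ defining an equifier is not expressible by a pseudopullback together with the free-living iso. But your claim that $[\mathbf{2},\cl]$ is ``handled by a transparent variant of the product argument, or directly by Theorem~\ref{th3.2}'' is exactly where the difficulty sits: for the universal property you need $\cl^{\mathbf{2}}$ equipped with the \emph{pointwise} cofibrations, and showing that this class is cofibrantly generated is not a product-style argument and is not an instance of Theorem~\ref{th3.2} as it stands; it is precisely the $\cc=\mathbf{2}$ case of Corollary~\ref{cor3.4}, which in this paper is \emph{deduced from} Corollary~\ref{cor3.3}. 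So as written your reduction is circular, or at best leaves unproved a statement of essentially the same depth as the one being proved.

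The paper avoids cotensors entirely by a different trick: form the inserter (resp.\ equifier) of the underlying diagram in $\LOC$, equip it with the \emph{trivial} combinatorial structure of Example~\ref{ex2.4}, and take the pseudopullback of the forgetful/inclusion functor $V\colon\Ins(F,G)_t\to\ck_t$ (resp.\ $\Eq(\varphi,\psi)_t\to\ck_t$) against $\Id\colon\ck\to\ck_t$. Theorem~\ref{th3.2} then says this pseudopullback is combinatorial, and its cellular structure is exactly the left-induced one $\{f\mid \text{image in }\ck\text{ is a cofibration}\}$ (cf.\ Remark~\ref{re3.8}); checking that this object is the inserter/equifier in $\COMB$ is then routine. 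Your second paragraph gestures at this left-induced structure, but gives no argument that it is cofibrantly generated, which is the whole point; the trivial-structure pseudopullback is what supplies it. Finally, the concern in your last paragraph that one must show ``no smaller cofibrantly generated structure could serve'' is unnecessary: $\COMB$ is a full sub-2-category of $\CELL$, so once the vertex of the $\CELL$-limit is combinatorial and the projections are combinatorial, the universal property in $\COMB$ follows at once; what needs care is only the choice of cellular structure on the limit, not any minimality statement.
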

\begin{proof}
Products of combinatorial categories are evident:
$$
\prod\limits_{i\in I} (\ck_i,\cx_i)=(\prod\limits_{i\in I}\ck_i,\prod\limits_{i\in I}\cx_i).
$$

Let $F,G:\ck\to\cl$ be combinatorial functors, $\varphi,\psi:F\to G$ na\-tu\-ral transformations and $\Eq(\varphi,\psi)$ their equifier
in $\CAT$. Consider a pseudopullback
$$
\xymatrix@=4pc{
\cp \ar [r]^{} \ar [d]_{}& \ck \ar [d]^{\Id}\\
\Eq(\varphi,\psi)_t\ar [r]_{V}& \ck_t
}
$$
where $V:\Eq(\varphi,\psi)\to\ck$ is the full embedding. Then $\cp$ is an equifier of $\varphi$ and $\psi$ in $\COMB$.

Finally, let $F,G:\ck\to\cl$ be combinatorial functors and $\Ins(F,G)$ their inserter in $\CAT$. Consider a pseudopullback
$$
\xymatrix@=4pc{
\cp \ar [r]^{} \ar [d]_{}& \ck \ar [d]^{\Id}\\
\Ins(F,G)_t\ar [r]_{V}& \ck_t
}
$$
where $V:\Ins(F,G)\to\ck$ is the forgetful functor. Then $\cp$ is an inserter of $F$ and $G$ in $\COMB$.

Clearly, all PIE-limits above are calculated in $\CELL$. 
\end{proof}

The consequence is that $\COMB$ has pseudolimits and lax limits (calculated in $\CAT$). The same is true for $\CELL$. Another consequence 
is \cite{L} 2.8.3 (see \cite{M} as well).

\begin{coro}\label{cor3.4} Let $\ck$ be a combinatorial category and $\cc$ a small ca\-te\-go\-ry. Then the functor category $\ck^\cc$
is combinatorial with respect to the pointwise combinatorial structure.
\end{coro}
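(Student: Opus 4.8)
The plan is to realise $\ck^\cc$ as an iterated PIE\nobreakdash-limit of copies of $\ck$ in $\COMB$ and to read off the cofibration structure from the explicit constructions in the proof of Corollary \ref{cor3.3}. Write $O$ and $M$ for the sets of objects and of morphisms of $\cc$ (both are sets, since $\cc$ is small) and $s,t\colon M\to O$ for the source and target maps. Reindexing along $s$ and $t$ yields colimit-preserving functors $s^\ast,t^\ast\colon\ck^O\to\ck^M$ sending $(X_o)_{o\in O}$ to $(X_{s(m)})_{m\in M}$, respectively $(X_o)_{o\in O}$ to $(X_{t(m)})_{m\in M}$. By Corollary \ref{cor3.3} (applied to products), $\ck^O$ and $\ck^M$ are combinatorial with their pointwise structures, and $s^\ast,t^\ast$ are combinatorial functors.

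First I would form the inserter $\ci=\Ins(s^\ast,t^\ast)$ in $\COMB$; its objects are the pairs $\bigl((X_o)_{o\in O},(\phi_m\colon X_{s(m)}\to X_{t(m)})_{m\in M}\bigr)$, that is, diagrams on the underlying graph of $\cc$. Let $U\colon\ci\to\ck^O$ be the projection and $\kappa\colon s^\ast U\Rightarrow t^\ast U$ the universal $2$-cell, whose $m$\nobreakdash-th coordinate is a natural transformation $\kappa_m\colon U_{s(m)}\Rightarrow U_{t(m)}$ (here $U_o$ is the $o$\nobreakdash-th coordinate of $U$). Next I would cut down to honest functors by two equifiers. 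The family $(\kappa_{\id_o})_{o\in O}$ together with the identity constitute two natural transformations $U\Rightarrow U$; their equifier $\ci_1$ in $\COMB$ selects the graph diagrams that carry identities to identities. Then, indexed over the set of composable pairs of $\cc$ (and, after a further reindexing, organised as a single pair of natural transformations between suitable coordinate functors of $\ci_1$), the assignments $(m_1,m_2)\mapsto\kappa_{m_2m_1}$ and $(m_1,m_2)\mapsto\kappa_{m_2}\cdot\kappa_{m_1}$ have an equifier $\ci_2$ in $\COMB$ which selects precisely the functors $\cc\to\ck$. Thus $\ci_2\cong\ck^\cc$, and by Corollary \ref{cor3.3} it is combinatorial.

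It then remains to check that the resulting cofibration class is the pointwise one. In the constructions of Corollary \ref{cor3.3} the cellular structure on a product is the coordinatewise one, and the cellular structure on an inserter or an equifier formed over a combinatorial category $\cd$ (as a pseudopullback along $\Id_{\cd}$, in the notation of Lemma \ref{le3.1}) consists of exactly those morphisms whose image in $\cd$ lies in $\cof(\cd)$. Running this through the three stages, a morphism $\tau\colon X\to Y$ of $\ck^\cc$ is a cofibration iff its image in $\ci_1$ is, iff its image in $\ci$ is, iff its image $(\tau_o\colon Xo\to Yo)_{o\in O}$ in $\ck^O$ is a cofibration, iff $\tau_o\in\cof(\ck)$ for every object $o$ of $\cc$. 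Since colimits in $\ck^\cc$ are likewise computed pointwise, $\ck^\cc$ with the class $\{\tau\mid\tau_o\in\cof(\ck)\text{ for all }o\}$ is exactly the pointwise combinatorial structure.

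The step I expect to require the most care is the middle one: one must present the set-indexed unit and composition laws of $\cc$ as honest $2$\nobreakdash-cells in $\COMB$ — taking, where needed, powers $\ck^{(-)}$ along the relevant index sets so that a whole family of equations is realised by a single equifier — and verify both that the natural transformations written down are genuinely natural and that their equifiers cut out precisely unitality and functoriality. The identification of the PIE\nobreakdash-limit's cofibrations with the pointwise cofibrations is then routine, but it must be carried out stage by stage, using the explicit pseudopullback descriptions of products, inserters and equifiers given in the proof of Corollary \ref{cor3.3}.
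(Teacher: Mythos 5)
Your argument is correct and is essentially the paper's proof: the paper simply observes that the cotensor $[\cc,\ck]$ can be built from PIE-limits and invokes Corollary \ref{cor3.3}, and your product--inserter--equifier presentation of $\ck^\cc$ (with the identity and composition laws imposed by equifiers) is exactly that standard construction, made explicit. Your stage-by-stage identification of the resulting cofibrations with the pointwise ones, via the pseudopullback descriptions in the proof of Corollary \ref{cor3.3}, matches what the paper leaves implicit.
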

\begin{proof}
The cotensor $[\cc,\ck]$ taken in $\CELL$ is the cellular category described in the Corollary. Since $[\cc,\ck]$ can be constructed using
PIE-limits, \ref{cor3.3} implies that $[\cc,\ck]$ is in $\COMB$ provided that $\ck$ is in $\COMB$.
%The functor category can be constructed using PIE-limits. At first, we take the product of copies of $\ck$ indexed by objects of $\cc$.
%This defines functors $\cc\to\ck$ on objects only. Then, using inserters, we define them on morphisms. Finally, equifiers convert
%these assignments to functors.
\end{proof} 

\begin{rem}\label{re3.5}
{
\em
Let $\ck$ be a Grothendieck abelian category and $\cc$ a class of objects in $\ck$. A $\cc$-monomorphism is a monomorphism whose cokernel belongs
to $\cc$. The class $\cc$-$\Mono$ of these monomorphisms makes $\ck$ a cellular category. Cellular objects here are precisely $\cc$-filtered objects, 
i.e., objects $K$ such that the morphism $0\to K$ is a transfinite composite of $\cc$-monomorphisms. A class $\cc$ is deconstructible if it is 
a class of $\cs$-filtered objects for a set $\cs$. The fundamental fact (basically due to \cite{SS}) is that $\cc$ is deconstructible if and
only if the cellular category $(\ck,\cc$-$\Mono)$ is combinatorial. 

Let $\cc$ be deconstructible and $\C(\ck)$ be the category of complexes over $\ck$. Since it is a functor category, \ref{cor3.4} implies that $\C(\ck)$ 
is combinatorial with respect to pointwise $\cc$-monomorhisms. Consequently the class $\C(\cc)$ of complexes with components in $\cc$ is deconstructible,
which was proved in \cite{S} 4.2 (1) using generalized Hill lemma. 
}
\end{rem}

Let $T:\ck\to\ck$ be a colimit preserving monad on a combinatorial category $\ck$ and $U:\Alg(T)\to\ck$ the forgetful functor from the category
of $T$-algebras. Then $\Alg(T)$ is locally presentable (see \cite{AR} Remark 2.78) and $U$ preserves colimits (see \cite{Bo} 4.3.2).
Thus $\Alg(T)$ is a combinatorial category where $f$ is a cofibration if and only if $Uf$ is a cofibration.

\begin{coro}\label{cor3.6} Let $T:\ck\to\ck$ be a colimit preserving monad on a combinatorial category $\ck$. Then $\Alg(T)$ is combinatorial.
\end{coro}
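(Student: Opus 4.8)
The plan is to exhibit $\Alg(T)$, equipped with the cellular structure described just before the statement (a morphism $f$ is a cofibration iff $Uf$ is), as a finite PIE-limit of combinatorial categories, and then to invoke \ref{cor3.3}. Recall that a $T$-algebra is a pair $(K,a)$ with $a:TK\to K$ satisfying $a\cdot\eta_K=\id_K$ and $a\cdot Ta=a\cdot\mu_K$; the idea is to produce the datum $a$ by an inserter and to impose the two equations by two successive equifiers.

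First I would form the inserter $\cm:=\Ins(T,\Id_\ck)$ of the functors $T,\Id_\ck:\ck\to\ck_t$: its objects are the pairs $(K,a:TK\to K)$, a morphism $(K,a)\to(K',a')$ is a map $u:K\to K'$ with $u\cdot a=a'\cdot Tu$, and $\cm$ carries the universal natural transformation $\alpha:TW\Rightarrow W$ (component $a$ at $(K,a)$), where $W:\cm\to\ck$ is the forgetful functor. Next I would take the equifier $\cm_1:=\Eq(\id_W,\alpha\cdot\eta W)$, cutting $\cm$ down to the objects obeying the unit law, and then the equifier $\Alg(T):=\Eq(\alpha\cdot T\alpha,\alpha\cdot\mu W)$ formed inside $\cm_1$, cutting it further down to the objects obeying associativity; the underlying category of the result is precisely $\Alg(T)$. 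The inserter and equifier constructions in the proof of \ref{cor3.3} are built as certain pseudopullbacks, so their cofibration classes are described by $\Ps$; inspecting them one sees that each of $\cm$, $\cm_1$, $\Alg(T)$ carries the cellular structure in which a morphism is a cofibration exactly when its underlying morphism in the source category is. Threading this through the three steps, a morphism of $\Alg(T)$ is a cofibration exactly when its image under $U$ is a cofibration of $\ck$, which is the structure we want; so it remains only to check that $\cm$, $\cm_1$, $\Alg(T)$ lie in $\COMB$.

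The main obstacle is that $T$ is assumed only colimit-preserving, not cofibration-preserving, so neither $T$ nor $T^2W$ need be a combinatorial functor into $\ck$, and the inserter above and the associativity-equifier cannot be formed in $\COMB$ in the obvious way. This is circumvented by taking the trivial combinatorial structure $\ck_t$ as the target of those functors: every colimit-preserving functor into a trivial combinatorial category is combinatorial (all its morphisms are cofibrations), so $T,\Id_\ck:\ck\to\ck_t$ and $T^2W,W:\cm_1\to\ck_t$ all qualify, while the transformations $\alpha\cdot\eta W$, $\alpha\cdot T\alpha$, $\alpha\cdot\mu W$ are genuine natural transformations irrespective of how we regard the codomain. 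Since the cellular structure that \ref{cor3.3} places on an inserter or equifier depends only on the source category and not on the targets of the functors involved, this substitution leaves the computation of the cofibrations of $\Alg(T)$ unchanged. Hence $\cm$, $\cm_1$ and $\Alg(T)$ are PIE-limits in $\COMB$ and therefore, by \ref{cor3.3}, combinatorial; in particular $\Alg(T)$ is. The remaining points --- that $\cm$ and $\cm_1$ are locally presentable, that $W$ and the relevant composites preserve colimits, and that the displayed equations really are the unit and associativity laws --- are routine.
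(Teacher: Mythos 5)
Your proof is correct, but it takes a different route than the paper. The paper's own argument is a one-liner: it quotes the known facts that $\Alg(T)$ is locally presentable and that $U:\Alg(T)\to\ck$ preserves colimits, and then applies Theorem \ref{th3.2} to the single pseudopullback of $U:\Alg(T)_t\to\ck_t$ against $\Id:\ck\to\ck_t$, which manifestly carries the left-induced structure ($f$ a cofibration iff $Uf$ is); this is exactly the ``left-induced'' pattern of Remark \ref{re3.8}. You instead rebuild $\Alg(T)$ from $\ck$ inside $\COMB$ as an inserter followed by two equifiers and invoke Corollary \ref{cor3.3}; the one genuine obstacle in that route --- that $T$ (hence $T^2W$) need not preserve cofibrations, so the relevant functors are not combinatorial into $\ck$ with its given structure --- you resolve correctly by retargeting them at $\ck_t$, observing that the structure Corollary \ref{cor3.3} puts on an inserter or equifier depends only on the source; this is consistent with the paper's own remark that inserters and equifiers are left-induced. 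What your approach buys is self-containedness: it does not need the external citations for local presentability of $\Alg(T)$ or colimit-preservation of $U$, since these come out of the PIE-limit machinery (indeed your last ``routine'' points, that the inserter projection $W$ preserves/creates colimits and that the two equations are the unit and associativity laws, are standard and unproblematic). What the paper's approach buys is brevity: one pseudopullback instead of three nested PIE-limits, at the cost of quoting standard monad facts.
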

\begin{proof}
The combinatorial category $\Alg(T)$ is given by a pseudopullback
$$
\xymatrix@=4pc{
\Alg(T) \ar [r]^{U} \ar [d]_{\Id}& \ck \ar [d]^{\Id}\\
\Alg(T)_t \ar [r]_{U}& \ck_t
}
$$
\end{proof} 

\begin{rem}\label{re3.7}
{
\em
Let $T:\ck\to\ck$ be a colimit preserving monad on a Grothendieck abelian category and $\cc$ a deconstructible class of objects in $\ck$. 
Then the class of $T$-algebras $A$ with $UA\in\cc$ is deconstructible in $T$-$\Alg$. This result was proved in \cite{Be} A.7 and follows
from \ref{cor3.6} and \ref{re3.5}.
}
\end{rem}

\begin{rem}\label{re3.8}
{
\em
More generally, let $F:\ck\to\cl$ be a colimit preserving functor from a locally presentable category $\ck$ to a combinatorial category $\cl$.
In the same way as above, we get a combinatorial structure on $\ck$ where $f$ is a cofibration if and only if $Ff$ is a cofibration. We have
a pseudopullback
$$
\xymatrix@=4pc{
\ck \ar [r]^{F} \ar [d]_{\Id}& \cl \ar [d]^{\Id}\\
\ck_t\ar [r]_{F}& \cl_t
}
$$
In accordance with \cite{HS} 4.1, we call this combinatorial structure \textit{left-induced} from $\cl$. 

Let us observe that both equifiers and inserters are given as left-induced structures. Since PIE-limits yield all pseudolimits, we could prove only the special case of \ref{th3.2} giving the existence of left-induced structures.  
}
\end{rem}

\section{Limits of combinatorial model categories}
Any combinatorial model category $\ck$ has two underlying combinatorial categories $W_1(\ck)=(\ck,\cc)$ and $W_2(\ck)=(\ck,\cc_0)$
where $\cc$ is the class of cofibrations of $\ck$ and $\cc_0$ is the class of trivial cofibrations. On every locally presentable
category $\ck$ there is a \textit{trivial} combinatorial model structure $\ck_{tm}$ such that both $W_1(\ck_{tm})$ and $W_2(\ck_{tm})$
are trivial combinatorial categories and a \textit{discrete} combinatorial model structure $\ck_{dm}$ such that both $W_1(\ck_{dm})$ 
and $W_2(\ck_{dm})$ are discrete combinatorial categories. Weak equivalences are all morphisms in the both cases. More generally, any combinatorial 
category $\ck$ yields a combinatorial model category $m(\ck)$ such that $W_i(m(\ck))=\ck$ for $i=1,2$. Again, any morphism of $\ck$ is a weak 
equivalence in $m(\ck)$. In particular, $\ck_{tm}=m(\ck_t)$ and $\ck_{dm}=m(\ck_d)$ .

Let $\CMOD$ denote the category of combinatorial model categories and left Quillen functors. We get the functors 
$$
W_1,W_2:\CMOD\to\COMB.
$$

\begin{lemma}\label{le4.1} $W_2$ preserves pseudopullbacks existing in $\CMOD$.
\end{lemma}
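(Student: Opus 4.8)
The plan is to recognise $W_2$ as a right $2$-adjoint; granting that, the lemma is immediate, since a right $2$-adjoint preserves every $2$-limit existing in its domain, and a pseudopullback is a $2$-limit (indeed a PIE-limit).

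The candidate left $2$-adjoint is the $2$-functor $m:\COMB\to\CMOD$ recalled above, which sends a combinatorial category $\ck$ to the combinatorial model category $m(\ck)$ whose weak equivalences are all morphisms and whose cofibrations --- equivalently, since every morphism is a weak equivalence, whose trivial cofibrations --- are exactly the cofibrations of $\ck$, and which sends a combinatorial functor to the left Quillen functor with the same underlying colimit-preserving functor. First I would check the $2$-adjunction $m\dashv W_2$. Unwinding definitions, a left Quillen functor $m(\ck)\to\cm$ is a colimit-preserving functor between the underlying categories that carries the cofibrations of $m(\ck)$ into the cofibrations of $\cm$ and the trivial cofibrations of $m(\ck)$ into the trivial cofibrations of $\cm$. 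In $m(\ck)$ both of these classes equal $\cof(\ck)$, and in $\cm$ every trivial cofibration is a cofibration, so the two conditions reduce to the single requirement that $\cof(\ck)$ be carried into the trivial cofibrations of $\cm$ --- precisely the condition that the functor be combinatorial as a functor $\ck\to W_2(\cm)$. Passing to underlying functors thus gives a bijection between left Quillen functors $m(\ck)\to\cm$ and combinatorial functors $\ck\to W_2(\cm)$; and since on both sides the $2$-cells are just the natural transformations between the underlying ordinary functors, this refines to an isomorphism of hom-categories, $2$-natural in $\ck$ and $\cm$. Hence $m$ is left $2$-adjoint to $W_2$.

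Then, given a pseudopullback of $F:\ck\to\cm$ and $G:\cl\to\cm$ existing in $\CMOD$, I would simply apply the natural isomorphism $\CMOD(m(-),-)\cong\COMB(-,W_2(-))$ to its defining $2$-categorical universal property; this converts that property into the one characterising a pseudopullback of $W_2(F)$ and $W_2(G)$ in $\COMB$, so $W_2$ carries the pseudopullback square to a pseudopullback square. (Combined with Theorem~\ref{th3.2}, this moreover identifies the trivial cofibrations of the pseudopullback model category with the morphisms that both projections send to trivial cofibrations.)

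The hard part, if there is one, is only the bookkeeping that identifies $m$ as the left $2$-adjoint of $W_2$; within it, the single point worth a second glance is the collapse of the two left-Quillen preservation clauses for $m(\ck)$ to the one combinatoriality clause for $W_2(\cm)$, which rests solely on trivial cofibrations being a subclass of cofibrations. It is also worth noting that $W_1$ sits on the opposite side of the same adjunction chain --- one checks symmetrically that $W_1\dashv m$, so $W_1$ is itself a left $2$-adjoint --- which is why there is no reason to expect $W_1$ to preserve pseudopullbacks, and why the lemma is stated for $W_2$ alone.
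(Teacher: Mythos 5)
Your proposal is correct and takes essentially the same route as the paper, whose entire proof is the observation that $m(-):\COMB\to\CMOD$ is left adjoint to $W_2$, so that $W_2$, being a right adjoint, preserves existing pseudopullbacks. You merely spell out the verification of the adjunction (and the correct side remark that $W_1\dashv m$, explaining why $W_1$ is not expected to preserve them), which the paper leaves implicit.
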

\begin{proof} It follows from the fact that $m(-):\COMB\to\CMOD$ is left adjoint to $W_2$.
\end{proof}
 
We know that
$$
\Ps(\tcof(\ck),\tcof(\cl))\subseteq\cof(\cp)\subseteq\Ps(\cof(\ck),\cof(\cl)).
$$
We will show that $W_1$ does not need to preserve existing pseudopullbacks.

\begin{exam}\label{ex4.2}
{
\em
Let $\ck$ be the standard model category of simplicial sets. Let $t:0\to 1$ and $\cl$ be the model structure on simplicial sets
where $\cof(\{t\})$ is the class of cofibrations and any morphism is a weak equivalence. It is easy to see that cofibrations
are precisely coproduct injections $K\to K\coprod D$ where $K$ is a simplicial set and $D$ is a discrete simplicial set. We will show
that the discrete model structure $\ck_d$ on simplicial sets yields a pseudopullback
$$
\xymatrix@=4pc{
\ck_d \ar [r]^{\Id} \ar [d]_{\Id}& \cl \ar [d]^{\Id}\\
\ck\ar [r]_{\Id}& \ck_{tm}
}
$$
in $\COMB$. Consider a model structure $\cp$ on simplicial sets such that $\Id:\cp\to\ck$ and $\Id:\cp\to\cl$ are left Quillen functors.
Since $\cof(\{t\})$ is the class of trivial cofibrations in $\cl$, the intersection of trivial cofibrations in $\ck$ and $\cl$ contains 
only isomorphisms. Thus $\tcof(\cp)=\Iso$. Since $\cof(\{t\})$ is the intersection of cofibrations in $\ck$ and $\cl$, trivial fibrations 
in $\cp$ should contain $\cof(\{t\})^\square$, which is the class of surjective simplicial maps. Since trivial cofibrations in $\cp$ are isomorphisms, 
$\cof(\cp)^\square$ is the class of weak equivalences in $\cp$. Hence $\cof(\cp)^\square$ has the 2-out-of-3 property and thus it contains all coproduct 
injections $u:A\to A\coprod B$. The reason that there is always $f:A\coprod B\to B$ with $fu=\id_A$. Since $\cof(\cp)\cap\cof(\cp)^\square=\Iso$
(see \cite{AHRT} III.4 (2)), $\cof(\cp)=\Iso$.   
}
\end{exam}

We do not know whether $\CMOD$ has pseudopullbacks. 

\begin{rem}\label{re4.3}
{
\em
(1) The existence of pseudopullbacks would imply the existence of PIE-limits in $\CMOD$. Since products of combinatorial model categories
exist and are preserved by $W_1$ and $W_2$, PIE-limits can be obtained from pseudopullbacks of the kind
$$
\xymatrix@=4pc{
\ck \ar [r]^{\Id} \ar [d]_{F}& \ck_{tm} \ar [d]^{F}\\
\cl\ar [r]_{\Id}& \cl_{tm}
}
$$
like in \ref{cor3.4}. Following \ref{le4.1}, $f$ is a trivial cofibration in $\ck$ if and only if $Ff$ is a trivial cofibration. If the same holds 
for cofibrations then $\ck$ is left-induced in the sense of \cite{HS}. 

(2) In particular, the existence of pseudopullbacks would imply the existence of lax limits in $\CMOD$. But Barwick proved that they always
exist and are preserved not only by $W_2$ but also by $W_1$ (\cite{Ba} 2.30).
}
\end{rem}

\end{document}